\documentclass[12pt]{article}

\usepackage{amsmath,amssymb,amsfonts,amsthm}
\usepackage{geometry}
\usepackage{cite}
\usepackage{graphicx}

\usepackage[colorlinks=true,
linkcolor=blue,
citecolor=blue,
urlcolor=blue]{hyperref}

\usepackage{titling}

\pretitle{%
\vspace{-3em}
\begin{center}
\rule{\linewidth}{2.5pt}\vspace{0.8em}
\LARGE\bfseries
}

\posttitle{%
\par\vspace{-0.2em}
\rule{\linewidth}{0.5pt}
\end{center}
\vspace{0.5em}
}

\numberwithin{equation}{section}

\newtheorem{theorem}{Theorem}[section]

\newtheorem{corollary}{Corollary}[section]

\theoremstyle{definition}

\newtheorem{remark}{Remark}[section]

\title{Two fixed functions can approximate any continuous function using only addition and composition}

\author{
\textbf{Vugar E. Ismailov}\thanks{
E-mail: \texttt{vugaris@gmail.com},
\texttt{vugaris@mail.ru}.}
}

\date{}

\begin{document}

\maketitle

\begin{abstract}
We prove that two fixed univariate functions, namely, an arbitrary
continuous non-affine function and a particular affine function, are
sufficient to approximate continuous functions of one variable under the
operations of addition and composition. The same fixed functions can
also be used to approximate multivariate continuous functions, provided
that the coordinate functions are also available. We also show that the
number of generators can be reduced from two to one. We construct a
specific continuous function that generates a dense class in the
univariate setting and, together with the coordinate functions, in the
multivariate setting.

\medskip

\noindent\textbf{Keywords:}
Kolmogorov--Arnold representation theorem;
composition of functions;
dyadic rationals.

\medskip

\noindent\textbf{2020 Mathematics Subject Classification:}
26B40, 41A30, 41A63.
\end{abstract}

\section{Introduction}

Superpositions of functions arise naturally in many branches of
mathematics and in numerous areas of applications. In many situations,
one attempts to represent or approximate complicated multivariate
functions by combining simpler functions through elementary operations.
Among such constructions, superpositions formed by addition and
composition are of particular importance.

Perhaps the first major attention to superpositions was drawn by
Hilbert's 13th problem. In his celebrated address at the International
Congress of Mathematicians held in Paris in 1900, Hilbert presented 23
fundamental mathematical problems intended to guide research in the
coming century. The 13th problem asks \emph{whether every continuous function
of three variables can be represented as a superposition of continuous
functions of two variables}. Hilbert believed that such a representation
should not exist, since he expected that certain functions possess a
genuinely multivariate character and therefore cannot be reduced to
superpositions of functions of fewer variables.

For more than fifty years, attempts to solve the problem were largely
directed toward confirming Hilbert's conjecture. However, in 1957, while still a teenager,
Arnold~\cite{Arnold1957} proved that every continuous function of three
variables can in fact be represented as a superposition of continuous
functions of two variables (see also~\cite{Arnold1959}). 
Shortly afterwards, his teacher Kolmogorov~\cite{Kolmogorov1957} obtained a stronger and more
elegant result, now known as the Kolmogorov--Arnold representation
theorem. It states that every continuous function
\[
f:[0,1]^n \to \mathbb{R}
\]
admits a representation of the form
\begin{equation}\label{eq:kart}
f(x_1,\dots,x_n)
=
\sum_{k=1}^{2n+1}
\Phi_k\!\left(
\sum_{j=1}^{n}\psi_{kj}(x_j)
\right),
\end{equation}
where $\psi_{kj}:\mathbb{R}\to\mathbb{R}$ are fixed continuous functions
and $\Phi_k:\mathbb{R}\to\mathbb{R}$ are continuous functions depending
on $f$. Thus the Kolmogorov--Arnold representation theorem shows that every
continuous multivariate function can be represented through \emph{sums and
compositions of univariate functions}. This theorem has been
revisited, refined, and extended in several directions (see, e.g.,
\cite[Chapter~1]{Khavinson1997},
\cite[Chapter~4]{Ismailov2021} and the references therein).

It should be noted that representation \eqref{eq:kart} remains
valid not only for continuous functions, but also for all discontinuous
functions $f$, with the same inner functions $\psi_{kj}$ as in the
continuous case and with outer functions $\Phi_k$ depending on $f$
(see \cite{Ismailov2023}). Moreover, the inner functions $\psi_{kj}(x_j)$ can
be chosen in the form $\lambda_j \psi(x_j+\epsilon k)$, and the outer
functions $\Phi_k$ can be replaced by a single function $\Phi$, both in
the continuous case \cite{Lorentz1962,Sprecher1965} and in the
discontinuous case \cite{Ismayilova2024}.

Despite its conceptual depth, the Kolmogorov--Arnold representation
theorem has important limitations from the viewpoint of applications.
The universal inner functions occurring in the theorem are rather
irregular~\cite{Vitushkin1967} and computationally 
impractical~\cite{Hieber2021}. Had these functions possessed sufficient
regularity and computability, multivariate approximation theory might have developed in a
very different direction.

The influence of Kolmogorov's theorem extended far beyond pure
mathematics. In particular, it played an important role in the
development of neural network theory. Its connection with feedforward
neural networks was first emphasized by Hecht--Nielsen~\cite{Hecht1987}. More recently,
the theorem has inspired the development of Kolmogorov--Arnold networks
(KANs), introduced as architectures based on compositions and sums of
univariate functions~\cite{Liu2024}. In KANs, nonlinear univariate functions are placed
on the edges of the network, while multivariate functions are generated
through repeated compositions of these univariate functions and
addition operations. The edge functions are typically parameterized by
splines.

A KAN computes functions of the form
\begin{equation*}
\begin{aligned}
f(x_1,\dots,x_n)
=
\sum_{k_L=1}^{N_L}
\phi_{k_L}
\Biggl(
\sum_{k_{L-1}=1}^{N_{L-1}}
\psi^{(L)}_{k_L k_{L-1}}
\Biggl(
\cdots
\sum_{k_1=1}^{N_1}
\psi^{(2)}_{k_2 k_1}
\Bigl(
\sum_{j=1}^{n}
\psi^{(1)}_{k_1 j}(x_j)
\Bigr)
\cdots
\Biggr)
\Biggr),
\end{aligned}
\end{equation*}
where all functions involved are univariate. This representation shows
that KANs generate multivariate functions through nested compositions of
sums of univariate functions.

Thus KANs are closely connected with the general philosophy of the
Kolmogorov--Arnold representation theorem: multivariate functions are
constructed from univariate functions through addition and
composition. However, even though KANs employ relatively simple and
practical functions such as splines, many different univariate functions
are typically involved in the construction, and their number increases
with the desired approximation accuracy. For approximation-theoretic results 
concerning KANs, see~\cite{Ismailov2026} and the references therein.

A related but different line of research is the theory of approximation
by feedforward neural networks. A basic notion in this area is
the universal approximation property. Roughly speaking, a class of
networks possesses this property if it can approximate every continuous
function on compact sets with arbitrary accuracy. The first results of
this type were obtained for shallow networks, that is, networks with a
single hidden layer. Such networks compute functions of the form
\begin{equation}\label{eq:shallow_network}
\sum_{i=1}^{r}
c_i\,
\sigma(\mathbf{a}_i\cdot\mathbf{x}+b_i),
\end{equation}
where $r\in\mathbb N$, $c_i,b_i\in\mathbb R$,
$\mathbf{a}_i\in\mathbb R^n$, and
$\sigma:\mathbb R\to\mathbb R$ is the activation function.

Approximation properties of shallow networks were extensively studied by
many authors, beginning with the works of 
Gallant and White~\cite{Gallant1988}, Cybenko~\cite{Cybenko1989},
Funahashi~\cite{Funahashi1989}, Hornik, Stinchcombe, and White~\cite{Hornik1989}, 
among others. A complete characterization was obtained in the influential 1993 paper
of Leshno, Lin, Pinkus, and Schocken~\cite{Leshno1993}: for a continuous activation
function, the family \eqref{eq:shallow_network} is dense in $C(K)$ for
every compact set $K\subset\mathbb R^n$ if and only if $\sigma$ is not a polynomial.

Modern applications often employ deep networks with several hidden
layers. A typical two-hidden-layer network computes functions of the
form
\[
\sum_{j=1}^{N_2}
v_j\,
\sigma
\left(
\sum_{i=1}^{N_1}
w_{ji}\,
\sigma(\mathbf{a}_i\cdot\mathbf{x}+b_i)
+d_j
\right),
\]
and deeper architectures are obtained by iterating this construction.
Thus deep neural networks may also be viewed as systems generated by
superpositions of a single univariate function together with many affine
functions. As the approximation error decreases, the number of affine
functions involved in the representation generally increases.

The preceding discussion reveals two different directions in the theory
of superpositions. On the one hand, the Kolmogorov--Arnold
representation theorem uses only finitely many fixed functions, but
these functions are rather irregular and not suitable for practical
implementation. On the other hand, neural network theory employs simple
and practically useful functions. For example, traditional feedforward
neural networks use superpositions of a single nonlinear activation
function with many affine functions, while Kolmogorov--Arnold networks
involve many simple nonlinear univariate functions, typically
spline-based edge functions. In both settings, however, the number of
functions involved in the representation generally increases as the
desired approximation accuracy improves.

This naturally leads to the following question.

\begin{center}
\emph{
Can one approximate arbitrary continuous functions using only a finite
family of simple fixed functions together with the operations of
addition and composition?
}
\end{center}

The purpose of the present paper is to show that the answer is
affirmative. We prove that two fixed functions are sufficient. More
precisely, we show that any fixed continuous non-affine function together with
one concrete affine function generate, under repeated addition and
composition, classes dense in spaces of continuous functions.

Both the univariate and multivariate settings are considered. In the
univariate case, approximation is obtained by repeated addition and
composition of two fixed functions
(Theorem~\ref{thm:two_function_univariate}). In the multivariate
case, one works with the coordinate functions
\[
x_1,\dots,x_n,
\]
together with the same two fixed functions and the operations of
addition and composition
(Theorem~\ref{thm:two_function_multivariate}).
Thus every continuous multivariate function can be approximated
arbitrarily well using only one continuous non-affine function, one
affine function, the coordinate functions, and the operations of
addition and composition.

It is important to emphasize that arbitrary multiplication by constants
is not assumed in advance. The necessary affine operations are generated
recursively from the fixed functions.

We also show that the number of generators can be reduced from two to
one. More precisely, we explicitly construct a continuous function that
generates a dense class in the univariate setting
(Corollary~\ref{cor:single_function_univariate}) and, together with the
coordinate functions, in the multivariate setting
(Corollary~\ref{cor:single_function_multivariate}).

The rest of the paper is organized as follows. In
Section~\ref{sec:approximation} we define the generated classes, prove
the approximation theorems, and discuss the recursive structure of the
generated functions. Section~\ref{sec:remarks} contains several remarks
and possible generalizations. In
Section~\ref{sec:single_function} we show that the number of generators
can be reduced from two to one by constructing a specific continuous
function that serves as a single generator.

\section{Approximation by superpositions of two functions}\label{sec:approximation}

In this section we prove that arbitrary continuous functions can be
approximated using only two fixed univariate functions together with the
operations of addition and composition. More precisely, we show that one
arbitrary continuous non-affine function together with one concrete
affine function generate classes dense in spaces of continuous
functions.

We first consider the univariate case and then pass to the
multivariate setting.

\subsection*{The univariate case}

The following theorem holds.

\begin{theorem}\label{thm:two_function_univariate}
Let $g_1:\mathbb R\to\mathbb R$ be continuous and non-affine, and let
\[
g_2(t)=1-\frac{t}{2}.
\]
Then, for every compact set $K\subset\mathbb R$, every function in
$C(K)$ can be uniformly approximated on $K$ by functions obtained from
$g_1$ and $g_2$ by repeated addition and composition.
\end{theorem}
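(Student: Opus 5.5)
Let $\mathcal G$ denote the smallest class of functions $\mathbb R\to\mathbb R$ that contains $g_1$ and $g_2$ and is closed under $f+h$ and $f\circ h$. The goal is to show that $\mathcal G$ contains all affine maps $t\mapsto at+b$ with $a,b$ dyadic rationals. Once that is done, I will reduce to the Leshno--Lin--Pinkus--Schocken theorem for shallow networks with activation $g_1$.

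\textbf{Step 1 (the identity and constants).} We have $g_2\circ g_2(t)=1-\tfrac12(1-\tfrac t2)=\tfrac12+\tfrac t4$. Write $A_{a,b}(t)=at+b$. Composition gives $A_{a,b}\circ A_{c,d}=A_{ac,ad+b}$, and addition gives $A_{a,b}+A_{c,d}=A_{a+c,b+d}$. So the affine elements of $\mathcal G$ form a set $S\subset\mathbb Q^2$ that is closed under both rules and contains $(-\tfrac12,1)$.

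From $(-\tfrac12,1)+(-\tfrac12,1)=(-1,2)$ we get $2-t$. Composing, $(-1,2)\circ(-1,2)=(1,0)$, so the identity lies in $S$. Then $t+t=2t$ is in $S$, and so is $2-t+2t$, which is $t+2$. Also $g_2(2t)=1-t$, and $(1-t)+(1-t)=2-2t$. Composing $g_2$ with $t+2$ gives $1-\tfrac{t+2}2=-\tfrac t2$.

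\textbf{Step 2 (negatives, the constant zero, arbitrary dyadic coefficients).} The slope $-\tfrac12$ is available. Composing $-\tfrac t2$ with $2t$ gives $-t$, so $S$ contains $(-1,0)$. Then $-t+t=0$, so the zero function is in $\mathcal G$. More generally $f+(-1)\circ f=0$, and $(-1)\circ f=-f$ for any $f\in\mathcal G$. Hence $\mathcal G$ is closed under negation, and $S$ is an additive group.

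Constants come next. We have $(1,2)+(-1,0)=(0,2)$, and composing $g_2$ with the constant $2$ gives the constant $0$. For $\tfrac12$, use $g_2\circ g_2\circ 0=\tfrac12$. Powers of $\tfrac12$ then come from repeatedly composing $g_2\circ g_2$ with constants and subtracting: for example, $\tfrac12+\tfrac c4-\tfrac12=\tfrac c4$. In the same way, slopes $2^{-k}$ come from $(\tfrac14,\tfrac12)\circ\cdots$ with the constant part subtracted. Slopes $2^k$ come from iterating $2t$, and integer multiples from additions. Thus $S\supseteq D\times D$, where $D$ is the ring of dyadic rationals.

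\textbf{Step 3 (networks).} For $a,b,c\in D$ the function $c\,g_1(at+b)=A_{c,0}\circ g_1\circ A_{a,b}$ lies in $\mathcal G$, and so do finite sums of such terms, together with an additive dyadic constant. Since $g_1$ is continuous and non-affine, it is not a polynomial of degree $\le1$.

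If $g_1$ is not a polynomial, Leshno et al.\ give density of $\sum c_i g_1(a_it+b_i)$ with real parameters in $C(K)$. Because $g_1$ is continuous and $K$ is compact, replacing $a_i,b_i,c_i$ by nearby dyadic values changes the sum uniformly little, so density with dyadic parameters follows.

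If $g_1$ is a polynomial of degree $m\ge2$, shallow sums give only polynomials of degree $\le m$. The fix is that $\mathcal G$ is closed under composition, so $g_1\circ g_1\circ\cdots$ has arbitrarily high degree. Using these, I would show that every monomial $t^j$ is in the closure. One route: divided differences $\sum c_i\,p(a_i t+b_i)$ of a polynomial $p$ of degree $N$ with dyadic nodes recover all $t^j$ with $j\le N$, since the span of $\{p(at+b)\}$ is all polynomials of degree $\le N$. The approximation from dyadic nodes is again justified by continuity. Weierstrass then finishes.

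The main obstacle is the bookkeeping in Step 2, namely producing every dyadic slope and intercept without multiplication by scalars. The key observation is that composition multiplies slopes while addition adds them, so having $2$, $-1$ and $\tfrac12$ as slopes together with the zero constant suffices. The polynomial case in Step 3 is a secondary subtlety.
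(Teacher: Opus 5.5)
Your proof is correct, and in the polynomial case it takes a genuinely different route from the paper. The affine bookkeeping differs only cosmetically: you get the identity at once as $(2-t)\circ(2-t)$, while the paper passes through the constants $2,1,-1$ and the map $t/2$. The nonpolynomial case is the same Leshno--Lin--Pinkus--Schocken argument with dyadic parameters.

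For $g_1$ a polynomial of degree $m\ge2$, the paper goes \emph{down} in degree. It forms the quadratic $\Delta_1^{m-2}g_1\in\mathcal S$ and deduces $t^2\in\overline{\mathcal S}$. It then proves that $\overline{\mathcal S}$ is closed under composition, and uses $2uv=(u+v)^2-u^2-v^2$ to make $\overline{\mathcal S}$ an algebra containing $1$ and $t$.

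You go \emph{up} instead. The $k$-fold iterate $p$ of $g_1$ lies in $\mathcal G$ itself and has degree $N=m^k$, and the translates $p(t),p(t+1),\dots,p(t+N)$ form a basis of the polynomials of degree at most $N$. That basis fact is the one step you assert rather than prove. It follows from $p(t+b)=\sum_{j=0}^{N}b^j p^{(j)}(t)/j!$ and invertibility of the Vandermonde matrix on the nodes $0,1,\dots,N$, or from the fact that $\Delta_1^{N-j}p$ has exact degree $j$. With integer nodes you need no approximation of the nodes at all, only dyadic approximation of the real coefficients.

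Your route gives explicit approximants $\sum_i\gamma_i\,p(t+i)$ in $\mathcal G$ and avoids both the closure-under-composition lemma and polarization. The paper's route yields the stronger structural fact that $\overline{\mathcal S}$ is an algebra closed under composition. That fact carries over verbatim to the multivariate Theorem~\ref{thm:two_function_multivariate}, where products of coordinates are needed. Your approach would need an extra ingredient there, such as the fact that ridge powers $(\boldsymbol\alpha\cdot\mathbf x)^j$ with dyadic $\boldsymbol\alpha$ span the homogeneous polynomials of degree $j$.

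One harmless slip: the affine elements of $\mathcal G$ need not have rational coefficients. For example, with $g_1(t)=\pi t^2$ one has $g_1(t+1)-g_1(t)=2\pi t+\pi\in\mathcal G$. You never use the claim $S\subset\mathbb Q^2$, so nothing breaks.
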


\begin{proof}
Let $\mathcal S$ denote the class of all functions obtained from
$g_1$ and $g_2$ by repeated addition and composition. For example, the functions
\[
g_1(g_2(t))+g_2(g_1(t)),
\quad
g_1(g_1(g_2(t))),
\quad
g_2(g_2(t))+g_1(t),
\quad
g_1(g_2(g_1(t)+g_2(t)))
\]
belong to $\mathcal S$. It is important to note that multiplication 
by constants is not assumed a priori.

We first show that $\mathcal S$ contains the functions
\[
t\mapsto 1,
\qquad
t\mapsto t,
\qquad
t\mapsto -t,
\qquad
t\mapsto \frac t2.
\]

Since
\[
g_2(t)=1-\frac t2,
\]
we have
\[
g_2(g_2(t))
=
1-\frac12\left(1-\frac t2\right)
=
\frac12+\frac t4.
\]
Hence
\[
g_2(t)+2g_2(g_2(t))
=
\left(1-\frac t2\right)
+
2\left(\frac12+\frac t4\right)
=
2.
\]
Thus the constant function $2$ belongs to $\mathcal S$.

Applying $g_2$ twice to the constant function $2$, we obtain
\[
g_2(g_2(2))=1,
\]
hence the constant function $1$ belongs to $\mathcal S$.

Since $2\in\mathcal S$, we also have
\[
4=2+2\in\mathcal S.
\]
Therefore
\[
g_2(4)=1-\frac42=-1,
\]
so the constant function $-1$ belongs to $\mathcal S$.

Next,
\[
2g_2(g_2(t))-1
=
\left(1+\frac t2\right)-1
=
\frac t2,
\]
and hence
\[
t=\frac t2+\frac t2
\]
belongs to $\mathcal S$.

Finally,
\[
g_2(t)-1
=
-\frac t2,
\]
and therefore
\[
-t=-\frac t2-\frac t2
\]
belongs to $\mathcal S$.

Thus
\[
t\mapsto 1\in\mathcal S,\qquad
t\mapsto t\in\mathcal S,\qquad
t\mapsto -t\in\mathcal S,\qquad
t\mapsto \frac t2\in\mathcal S.
\]

Since $t/2\in\mathcal S$ and $\mathcal S$ is closed under composition,
it follows that if $f\in\mathcal S$, then $f/2\in\mathcal S$.
Repeating this argument, we obtain $f/2^r\in\mathcal S$ for every
$r\in\mathbb N$.
Since $\mathcal S$ is closed under addition and sign changes, it follows
that
\[
\frac{m}{2^r}f\in\mathcal S
\]
for every dyadic rational number $m/2^r$.

In particular, every affine function
\[
t\mapsto at+b,
\]
where $a$ and $b$ are dyadic rationals, belongs to $\mathcal S$.

We distinguish two cases: either $g_1$ is nonpolynomial or $g_1$ is a
polynomial of degree at least $2$.

Assume first that $g_1$ is nonpolynomial. By the theorem of
Leshno, Lin, Pinkus, and Schocken, since $g_1$ is
continuous and nonpolynomial, finite sums of the form
\[
\sum_{k=1}^N
c_k\,g_1(a_k t+b_k)
\]
are dense in $C(K)$ for every compact set $K\subset\mathbb R$.

Since dyadic rationals are dense in $\mathbb R$, the coefficients
$a_k,b_k,c_k$ may be approximated arbitrarily closely by dyadic
rationals. Since $g_1$ is continuous, it follows that finite sums of the
form
\[
\sum_{k=1}^N
\alpha_k\,g_1(\beta_k t+\gamma_k),
\]
where $\alpha_k,\beta_k,\gamma_k$ are dyadic rationals, are also dense
in $C(K)$.

Each affine function
\[
t\mapsto \beta_k t+\gamma_k
\]
belongs to $\mathcal S$, and therefore each function
\[
g_1(\beta_k t+\gamma_k)
\]
belongs to $\mathcal S$ by composition. Since $\mathcal S$ is closed
under dyadic scalar multiplication and addition, every finite sum of the
above form belongs to $\mathcal S$. Hence $\mathcal S$ is dense in
$C(K)$.

Now assume that $g_1$ is a polynomial of degree at least $2$. We use
finite differences to reduce the degree of $g_1$ and obtain a quadratic
polynomial.

Recall that for a function $f$ and a step size $h$, the finite
difference operator is defined by
\[
\Delta_h f(t)=f(t+h)-f(t).
\]
Its iterates are defined recursively by
\[
\Delta_h^m f
=
\Delta_h(\Delta_h^{m-1}f)
=
\sum_{r=0}^{m}
(-1)^{m-r}
\binom{m}{r}
f(t+rh).
\]
A basic property of finite differences states that if $f$ is a
polynomial of degree $d$, then $\Delta_h f$ is a polynomial of degree
$d-1$. Consequently, $\Delta_h^{d-2}f$ is a quadratic polynomial.

We apply this construction with $h=1$. If $d=\deg g_1$, then
\[
q(t)
=
\Delta_1^{d-2}g_1(t)
=
\sum_{r=0}^{d-2}
(-1)^{d-2-r}
\binom{d-2}{r}
g_1(t+r)
\]
is a quadratic polynomial.

Since every affine function with dyadic coefficients belongs to
$\mathcal S$, all shifts
\[
t\mapsto t+r
\]
belong to $\mathcal S$. Therefore, $q\in\mathcal S$.

Write
\[
q(t)=At^2+Bt+C,
\qquad A\ne0.
\]

Let $\overline{\mathcal S}$ denote the closure of $\mathcal S$ with
respect to uniform convergence on compact sets.

We claim that
\[
t^2\in\overline{\mathcal S}.
\]
Indeed, since
\[
t^2=\frac1Aq(t)-\frac BA\,t-\frac CA,
\]
and the dyadic rationals are dense in $\mathbb R$, for every compact set
$K\subset\mathbb R$ and every $\varepsilon>0$ there exist dyadic numbers
$\alpha,\beta,\gamma$ such that
\[
\left\|
\alpha q(t)+\beta t+\gamma-t^2
\right\|_{K}
<
\varepsilon.
\]
Since $q,t,1\in\mathcal S$ and $\alpha,\beta,\gamma$ are dyadic,
\[
\alpha q(t)+\beta t+\gamma\in\mathcal S.
\]
Thus $t^2\in\overline{\mathcal S}$.

We next show that $\overline{\mathcal S}$ is closed under addition,
composition, and multiplication.

Closure under addition follows immediately from the definition of
$\overline{\mathcal S}$.
To prove closure under composition, let $u,v\in\overline{\mathcal S}$.
Choose sequences $u_n,v_n\in\mathcal S$ such that
\[
u_n\to u,
\qquad
v_n\to v
\]
uniformly on compact subsets of $\mathbb R$.

Let $K\subset\mathbb R$ be an arbitrary compact set. Since $v_n\to v$ 
uniformly on $K$, there exists a compact set $L\subset\mathbb R$
containing both $v(K)$ and $v_n(K)$ for all sufficiently large $n$.
Indeed, since $v(K)$ is compact, the set
\[
L:=\{y\in\mathbb R:\operatorname{dist}(y,v(K))\le1\}
\]
is compact and contains $v(K)$. Moreover, since $v_n\to v$ uniformly on $K$,
there exists $N$ such that
\[
|v_n(x)-v(x)|\le1
\]
for all $x\in K$ and all $n\ge N$. Hence $v_n(K)\subset L$
for all $n\ge N$.

Now let us write
\[
u_n(v_n(x))-u(v(x))
=
\bigl(u_n(v_n(x))-u(v_n(x))\bigr)
+
\bigl(u(v_n(x))-u(v(x))\bigr).
\]
Since $u_n\to u$ uniformly on $L$, the first term converges uniformly
to $0$ on $K$. Moreover, since $u$ is uniformly continuous on $L$ and
$v_n\to v$ uniformly on $K$, the second term also converges uniformly
to $0$ on $K$. Therefore
\[
u_n\circ v_n\to u\circ v
\]
uniformly on $K$. Since $u_n\circ v_n\in\mathcal S$, it follows that
$u\circ v\in\overline{\mathcal S}$.

Now let $u,v\in\overline{\mathcal S}$. Since $\overline{\mathcal S}$ is
closed under addition and composition and since $t^2\in\overline{\mathcal S}$,
we have
\[
(u+v)^2\in\overline{\mathcal S},
\qquad
u^2\in\overline{\mathcal S},
\qquad
v^2\in\overline{\mathcal S}.
\]
Hence
\[
(u+v)^2-u^2-v^2=2uv
\]
belongs to $\overline{\mathcal S}$. Since $t\mapsto t/2\in\mathcal S$,
composing with this function yields
\[
uv\in\overline{\mathcal S}.
\]
Therefore $\overline{\mathcal S}$ is closed under multiplication.

Since $1,t\in\overline{\mathcal S}$ and $\overline{\mathcal S}$ is closed
under addition and multiplication, every polynomial with dyadic
coefficients belongs to $\overline{\mathcal S}$. Since dyadic 
rationals are dense in $\mathbb R$, every polynomial can be
uniformly approximated on compact sets by polynomials with dyadic
coefficients. Therefore every polynomial belongs to
$\overline{\mathcal S}$. By the Weierstrass approximation theorem,
polynomials are dense in $C(K)$ for every compact set $K\subset\mathbb R$.
Hence $\mathcal S$ is dense in $C(K)$.
\end{proof}

\subsection*{The multivariate case}

The previous theorem concerns approximation of univariate functions.
For multivariate functions, one additionally allows the coordinate
functions and considers the class generated from them together with the
same two fixed functions under repeated addition and composition.

\begin{theorem}\label{thm:two_function_multivariate}
Let $g_1:\mathbb R\to\mathbb R$ be continuous and non-affine, and define
\[
g_2(t)=1-\frac{t}{2}.
\]

Let $\mathcal S_n$ be the smallest class of real-valued functions on
$\mathbb R^n$ satisfying the following properties:
\begin{enumerate}
\item[(i)]
the coordinate functions
\[
x_1,\dots,x_n
\]
belong to $\mathcal S_n$;

\item[(ii)]
if $u,v\in\mathcal S_n$, then
\[
u+v\in\mathcal S_n;
\]

\item[(iii)]
if $u\in\mathcal S_n$, then
\[
g_i\circ u\in\mathcal S_n,
\qquad i=1,2.
\]
\end{enumerate}

Then, for every compact set $K\subset\mathbb R^n$, the class
$\mathcal S_n$ is dense in $C(K)$.
\end{theorem}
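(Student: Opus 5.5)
The plan is to reduce the multivariate statement to the univariate Theorem~\ref{thm:two_function_univariate}, combined with a ridge-function density result. The key observation is a closure property: if $u\in\mathcal S_n$ and $\varphi\in\mathcal S$ (the univariate class from Theorem~\ref{thm:two_function_univariate}), then $\varphi\circ u\in\mathcal S_n$. This is proved by induction on the construction of $\varphi$. The base cases $\varphi=g_1$ and $\varphi=g_2$ are rule (iii). A sum $\varphi=\varphi_1+\varphi_2$ is handled by rule (ii). A composition $\varphi=\varphi_1\circ\varphi_2$ satisfies $\varphi\circ u=\varphi_1\circ(\varphi_2\circ u)$ and is handled by applying the induction hypothesis twice. (If $\mathcal S$ is taken to contain the identity $t$, then $t\circ u=u$ is trivial.)

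With this in hand, the computations in the proof of Theorem~\ref{thm:two_function_univariate} transfer to $\mathcal S_n$. Since $t/2$, $-t$ and the constant $1$ lie in $\mathcal S$, we obtain:
\begin{itemize}
\item $u/2$, $-u$ and the constant $1$ belong to $\mathcal S_n$ for every $u\in\mathcal S_n$;
\item hence $\mathcal S_n$ is closed under multiplication by dyadic rationals;
\item hence every affine function $\mathbf a\cdot\mathbf x+b$ with dyadic $\mathbf a\in\mathbb R^n$ and dyadic $b$ lies in $\mathcal S_n$, built from the coordinates by sums and dyadic scalings.
\end{itemize}

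Next, for a continuous $\varphi$ on $\mathbb R$, approximate $\varphi(\mathbf a\cdot\mathbf x)$ on $K$. Choose a compact interval $J\supset\{\mathbf a\cdot\mathbf x:\mathbf x\in K\}$. Theorem~\ref{thm:two_function_univariate} gives $\psi\in\mathcal S$ with $\|\psi-\varphi\|_J<\varepsilon$, and then $\psi(\mathbf a\cdot\mathbf x)\in\mathcal S_n$ by the closure property. Consequently every finite sum $\sum_k \varphi_k(\mathbf a_k\cdot\mathbf x)$ with dyadic directions $\mathbf a_k$ and continuous $\varphi_k$ lies in the closure of $\mathcal S_n$ in $C(K)$.

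It remains to show that such ridge sums are dense in $C(K)$. I would take the shallow-network theorem of Leshno, Lin, Pinkus, and Schocken with a fixed nonpolynomial continuous activation, e.g. $\sigma(t)=e^t$ or $\sigma=\max(0,t)$. It gives density of $\sum c_k\sigma(\mathbf a_k\cdot\mathbf x+b_k)$. Perturbing $\mathbf a_k$ to dyadic vectors costs little by uniform continuity of $\sigma$ on a bounded set. Each term is then $\varphi_k(\mathbf a_k\cdot\mathbf x)$ with $\varphi_k(s)=c_k\sigma(s+b_k)$, so it is handled by the previous step. Alternatively one can avoid neural network results by showing the closure of $\mathcal S_n$ is an algebra (a squaring trick as in the univariate proof, since $t^2\in\overline{\mathcal S}$ gives $u^2$ in the closure, hence $uv$) containing the coordinates and constants, and then applying Stone--Weierstrass. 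This route may be cleaner, but it needs closure of $\overline{\mathcal S_n}$ under composition with $\overline{\mathcal S}$, which requires the same uniform-continuity argument as in the univariate proof, now on compacta in $\mathbb R^n$.

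The main obstacle is ensuring the induction and the closure arguments are stated precisely: composing elements of $\mathcal S_n$ with approximants rather than exact functions, with the compact intervals chosen to contain the ranges. Everything else is bookkeeping inherited from Theorem~\ref{thm:two_function_univariate}.
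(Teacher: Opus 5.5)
Your proof is correct, but it is organized differently from the paper's. The paper does not use Theorem~\ref{thm:two_function_univariate} as a black box here. Instead it re-runs the case split on $g_1$:
\begin{itemize}
\item when $g_1$ is nonpolynomial, it applies the Leshno--Lin--Pinkus--Schocken theorem with $g_1$ itself as the activation and perturbs all parameters to dyadics, so its approximants are explicit shallow sums $\sum_k\gamma_k\,g_1(\boldsymbol\alpha_k\cdot\mathbf x+\beta_k)$;
\item when $g_1$ is a polynomial, it repeats the finite-difference construction of $t^2$ and then uses polarization and Stone--Weierstrass.
\end{itemize}

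You instead isolate the lifting lemma ($\varphi\in\mathcal S$, $u\in\mathcal S_n$ $\Rightarrow$ $\varphi\circ u\in\mathcal S_n$). The paper uses this only tacitly, e.g.\ when it asserts $h\circ u\in\overline{\mathcal S_n}$. You combine it with the univariate theorem to place every ridge function $\varphi(\mathbf a\cdot\mathbf x)$ with dyadic direction $\mathbf a$ into the closure of $\mathcal S_n$, and density of ridge sums finishes the proof. This removes the case distinction altogether and makes the multivariate result a formal consequence of the univariate one.

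Your route can be streamlined further in two places. First, the linear span of $\{e^{\mathbf a\cdot\mathbf x}:\mathbf a\in\mathbb D^n\}$ is already a point-separating algebra containing the constants. Stone--Weierstrass therefore gives ridge density without Leshno et al.\ and without any perturbation. Second, in your algebra route it suffices to show $uv\in\overline{\mathcal S_n}$ for $u,v\in\mathcal S_n$, by approximating $t^2$ on the compact ranges, and then use continuity of multiplication in $C(K)$. So no composition-of-closures argument is needed.

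What the paper's route buys in exchange is structural information. In the nonpolynomial case its approximants involve $g_1$ at composition depth one. Yours have the form $\psi(\mathbf a\cdot\mathbf x)$ with $\psi$ an uncontrolled, possibly deeply nested element of $\mathcal S$.
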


\begin{proof}
As in the proof of
Theorem~\ref{thm:two_function_univariate}, the univariate functions
\[
1,
\qquad
t,
\qquad
-t,
\qquad
\frac t2
\]
can be generated from $g_2$ by repeated addition and composition.
Consequently, if $u\in\mathcal S_n$, then composition of $u$ with these
univariate functions yields
\[
1\in\mathcal S_n,
\qquad
-u\in\mathcal S_n,
\qquad
\frac u2\in\mathcal S_n.
\]
By repeated addition and repeated composition with the function
$t\mapsto t/2$, one obtains
\[
\frac{m}{2^r}u\in\mathcal S_n
\]
for all $m\in\mathbb Z$ and $r\in\mathbb N\cup\{0\}$.

Since the constant function $1$ belongs to $\mathcal S_n$, all
dyadic constant functions also belong to $\mathcal S_n$. Hence, for
every $u\in\mathcal S_n$ and every $a,b\in\mathbb D$,
\[
au+b\in\mathcal S_n,
\]
where $\mathbb D$ denotes the set of dyadic rational numbers.
In particular, since $x_1,\dots,x_n\in\mathcal S_n$, every function of
the form
\[
\mathbf x\mapsto \alpha_1x_1+\cdots+\alpha_nx_n+\beta,
\qquad
\alpha_1,\dots,\alpha_n,\beta\in\mathbb D,
\]
belongs to $\mathcal S_n$.

Assume first that $g_1$ is nonpolynomial. By the theorem of Leshno,
Lin, Pinkus, and Schocken, finite sums of the form
\[
\sum_{k=1}^N
c_k\,g_1(\mathbf a_k\cdot \mathbf x+b_k),
\]
where $c_k,b_k\in\mathbb R$ and $\mathbf a_k\in\mathbb R^n$,
are dense in $C(K)$ for every compact set
$K\subset\mathbb R^n$.

Approximating the parameters by dyadic numbers and using continuity
of $g_1$, we conclude that finite sums of the form
\[
\sum_{k=1}^N
\gamma_k\,
g_1(\boldsymbol\alpha_k\cdot \mathbf x+\beta_k),
\]
where $\gamma_k,\beta_k$ belong to the set $\mathbb D$ of dyadic
rational numbers and $\boldsymbol\alpha_k\in\mathbb D^n$,
are also dense in $C(K)$. 
For completeness, let us briefly justify this assertion.

Fix
\[
H(\mathbf x)
=
\sum_{k=1}^N
c_k\,g_1(\mathbf a_k\cdot \mathbf x+b_k)
\]
and let $\varepsilon>0$ be arbitrary. Since there are finitely many
terms, it is enough to approximate each term within $\varepsilon/N$.

Consider one term
\[
c_k\,g_1(\mathbf a_k\cdot \mathbf x+b_k).
\]
Since $K$ is compact, the set
\[
\{\mathbf a_k\cdot \mathbf x+b_k:\mathbf x\in K\}
\]
is compact in $\mathbb R$. Choose a compact interval
\[
I=[m-1,M+1]
\]
containing this set, where
\[
m=\min_{\mathbf x\in K}(\mathbf a_k\cdot \mathbf x+b_k),
\qquad
M=\max_{\mathbf x\in K}(\mathbf a_k\cdot \mathbf x+b_k).
\]
Since $g_1$ is continuous, it is uniformly continuous on $I$.
Therefore, for every $\eta>0$ there exists $\delta\in(0,1)$ such that
\[
|u-v|<\delta
\quad\Longrightarrow\quad
|g_1(u)-g_1(v)|<\eta
\]
for all $u,v\in I$.

Now choose dyadic numbers $\gamma_k,\beta_k$ and a vector
\[
\boldsymbol\alpha_k
=
(\alpha_{k1},\dots,\alpha_{kn})
\in\mathbb D^n
\]
such that
\[
|\gamma_k-c_k|<\eta,
\qquad
|b_k-\beta_k|<\frac{\delta}{2},
\]
and
\[
|a_{kj}-\alpha_{kj}|
<
\frac{\delta}{2nL},
\qquad j=1,\dots,n,
\]
where
\[
L=\max_{\mathbf x\in K}\max_{1\le j\le n}|x_j|.
\]

Then for every $\mathbf x\in K$,
\[
\begin{aligned}
\left|
(\mathbf a_k\cdot \mathbf x+b_k)
-
(\boldsymbol\alpha_k\cdot \mathbf x+\beta_k)
\right|
&\le
\sum_{j=1}^n
|a_{kj}-\alpha_{kj}||x_j|
+
|b_k-\beta_k| \\
&<
n\cdot\frac{\delta}{2nL}\cdot L
+
\frac{\delta}{2}
=
\delta.
\end{aligned}
\]
Since $\delta<1$, both
\[
\mathbf a_k\cdot \mathbf x+b_k
\quad\text{and}\quad
\boldsymbol\alpha_k\cdot \mathbf x+\beta_k
\]
belong to $I$. Hence
\[
\left|
g_1(\mathbf a_k\cdot \mathbf x+b_k)
-
g_1(\boldsymbol\alpha_k\cdot \mathbf x+\beta_k)
\right|
<
\eta
\]
for all $\mathbf x\in K$.

Choosing $\eta$ sufficiently small and hence $\gamma_k$
sufficiently close to $c_k$, we obtain
\[
\left|
c_k g_1(\mathbf a_k\cdot \mathbf x+b_k)
-
\gamma_k g_1(\boldsymbol\alpha_k\cdot \mathbf x+\beta_k)
\right|
<
\frac{\varepsilon}{N}
\]
for all $\mathbf x\in K$.

Summing over $k=1,\dots,N$, we conclude that $H$ can be uniformly
approximated on $K$ by finite sums with dyadic coefficients.

Since the coordinate functions belong to $\mathcal S_n$, every affine
function
\[
\mathbf x\mapsto \boldsymbol\alpha_k\cdot \mathbf x+\beta_k
\]
with dyadic coefficients belongs to $\mathcal S_n$. Hence every function
\[
g_1(\boldsymbol\alpha_k\cdot \mathbf x+\beta_k)
\]
belongs to $\mathcal S_n$. Therefore all finite sums of the form
\[
\sum_{k=1}^N
\gamma_k\,
g_1(\boldsymbol\alpha_k\cdot \mathbf x+\beta_k),
\]
with dyadic coefficients belong to $\mathcal S_n$. Since such sums are
dense in $C(K)$, the class $\mathcal S_n$ is also dense in $C(K)$.

Now consider the case where $g_1$ is a polynomial of degree at least $2$.
As in the proof of Theorem~\ref{thm:two_function_univariate}, finite
differences allow us to obtain $t^2$ in the closure of the univariate
class generated by $g_1$ and $g_2$. Moreover, the same argument as in
the proof of Theorem~\ref{thm:two_function_univariate} shows that if
$h$ belongs to the closure of the univariate generated class and
$u\in\overline{\mathcal S_n}$, then
\[
h\circ u\in\overline{\mathcal S_n}.
\]
Hence, if $u\in\overline{\mathcal S_n}$, then
\[
u^2\in\overline{\mathcal S_n}.
\]

Since $\overline{\mathcal S_n}$ is closed under addition and
multiplication by dyadic scalars, the polarization identity
\[
uv=\frac{(u+v)^2-u^2-v^2}{2}
\]
shows that it is also closed under multiplication. Since the
coordinate functions $x_1,\dots,x_n$ belong to $\mathcal S_n$,
every polynomial in $x_1,\dots,x_n$ with dyadic coefficients belongs
to $\overline{\mathcal S_n}$.

Since dyadic rationals are dense in $\mathbb R$, every polynomial can be
uniformly approximated on compact sets by polynomials with dyadic
coefficients. Therefore every polynomial in $x_1,\dots,x_n$ belongs to
$\overline{\mathcal S_n}$.
By the Stone--Weierstrass theorem, polynomials are dense in $C(K)$.
Therefore $\mathcal S_n$ is dense in $C(K)$.
\end{proof}

\subsection*{Structure of the generated classes}

In this subsection, we describe a recursive structure underlying the
class of univariate functions generated from $g_1$ and $g_2$ by repeated
addition and composition. The multivariate case can be treated in a
similar way.

Define classes $\mathcal G_m$ recursively by
\[
\mathcal G_0=\{g_1,g_2\},
\]
and
\[
\mathcal G_{m+1}
=
\mathcal G_m
\cup
\{f+h:\ f,h\in\mathcal G_m\}
\cup
\{f\circ h:\ f,h\in\mathcal G_m\},
\qquad m\ge0.
\]
Then the class of all functions obtained from $g_1$ and $g_2$ by finite
repeated applications of addition and composition is
\[
\mathcal G=\bigcup_{m=0}^{\infty}\mathcal G_m .
\]

Indeed, every function belonging to $\mathcal G$ is clearly obtained
from $g_1$ and $g_2$ by finitely many such operations. Conversely,
consider any finite expression $E$ formed from $g_1$ and $g_2$ using
only the operations $+$ and $\circ$. 
Let $L(E)$ denote the \emph{length} of $E$, that is, the number of
operation symbols $+$ and $\circ$ appearing in $E$. We prove by
induction on $L(E)$ that $E\in\mathcal G$.

If $L(E)=0$, then $E$ is either $g_1$ or $g_2$, and therefore
\[
E\in\mathcal G_0\subset\mathcal G.
\]

Assume now that every expression of length at most $r$ belongs to
$\mathcal G$, and let $E$ be an expression with $L(E)=r+1$.
Then the outermost operation in $E$ is either
addition or composition. Assume first that the outermost operation is
addition, so that
\[
E=F+H.
\]
Since
\[
L(E)=L(F)+L(H)+1,
\]
we have
\[
L(F)\le r,
\qquad
L(H)\le r.
\]
By the induction hypothesis, there exist indices $p,q\ge0$ such that
\[
F\in\mathcal G_p,
\qquad
H\in\mathcal G_q.
\]
Let
\[
N=\max\{p,q\}.
\]
Since
\[
\mathcal G_0\subseteq\mathcal G_1\subseteq\mathcal G_2\subseteq\cdots,
\]
we obtain
\[
F,H\in\mathcal G_N.
\]
Hence, by the definition of $\mathcal G_{N+1}$,
\[
F+H\in\mathcal G_{N+1}.
\]
Thus $E\in\mathcal G$.

The case where the outermost operation is composition is treated in the
same way. Therefore every finite expression generated from $g_1$ and
$g_2$ by repeated addition and composition belongs to $\mathcal G$.

For example,
\[
\mathcal G_1
=
\{
g_1,\,
g_2,\,
g_1+g_1,\,
g_1+g_2,\,
g_2+g_2,\,
g_1\circ g_1,\,
g_1\circ g_2,\,
g_2\circ g_1,\,
g_2\circ g_2
\},
\]
while $\mathcal G_2$ already contains more complicated expressions such as
\[
(g_1+g_2)\circ g_1,
\qquad
(g_1\circ g_2)+(g_2\circ g_1),
\qquad
(g_1\circ g_1)\circ g_2.
\]

It is also useful to observe how the expression length grows in this
construction. Since
\[
L(F+H)=L(F)+L(H)+1
\]
and similarly
\[
L(F\circ H)=L(F)+L(H)+1,
\]
it follows that if all expressions in $\mathcal G_{m-1}$ have length at
most $\ell$, then all expressions in $\mathcal G_m$ have length at most
$2\ell+1$.

Therefore expressions in
\[
\mathcal G_0,\mathcal G_1,\mathcal G_2,\mathcal G_3,\dots
\]
have lengths at most
\[
0,1,3,7,\dots
\]
respectively. In general, every expression in $\mathcal G_m$ has length
at most $2^m-1$.

Figure~\ref{fig:structure} illustrates the recursive construction of the
classes $\mathcal G_m$.

\begin{figure}[ht]
\centering
\includegraphics[width=\textwidth]{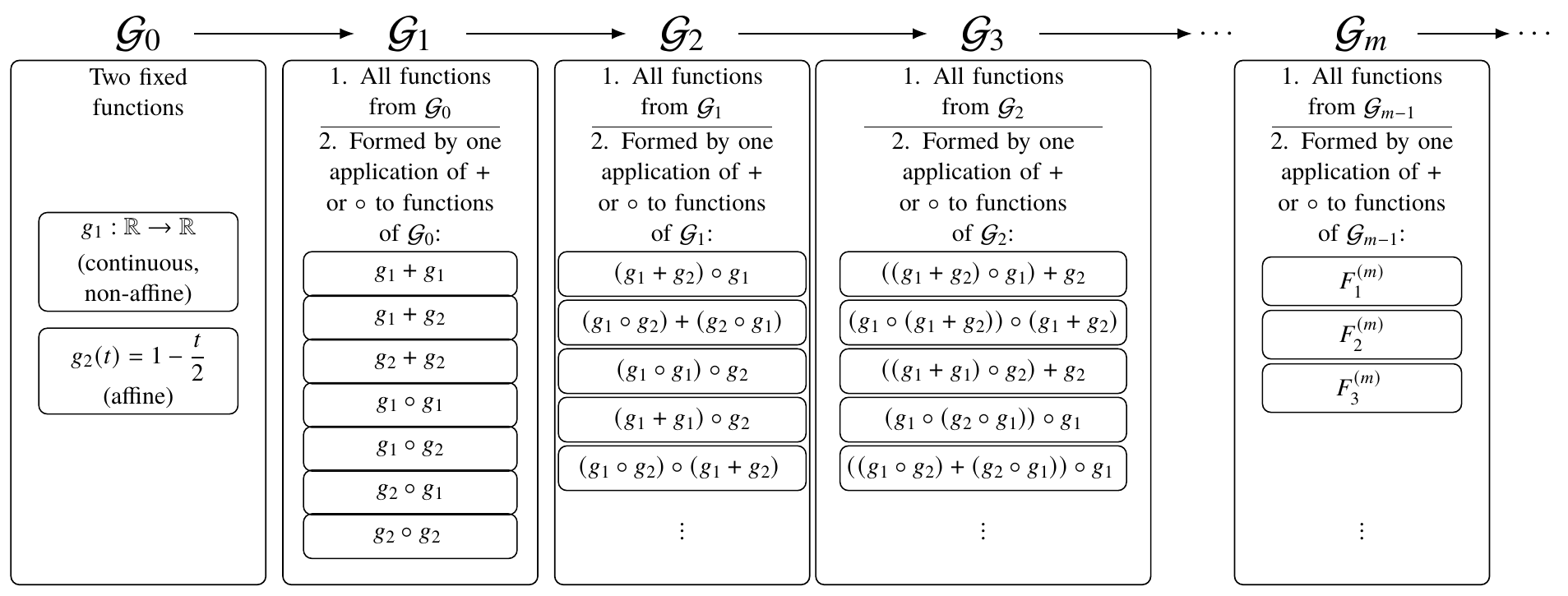}
\caption{Recursive construction of the classes $\mathcal G_m$ generated
from $g_1$ and $g_2$ using only addition and composition. For any
compact set $K\subset\mathbb R$, any $f\in C(K)$, and any $\varepsilon>0$,
there exist $m\in\mathbb N$ and $F\in\mathcal G_m$ such that
$\sup_{x\in K}|f(x)-F(x)|<\varepsilon$.}
\label{fig:structure}
\end{figure}

The multivariate case admits a similar interpretation. One
starts with
\[
\mathcal G_0=\{x_1,\dots,x_n,g_1,g_2\},
\]
and then recursively generates new classes by repeated addition and
composition.

\section{Some remarks}\label{sec:remarks}

In this section we collect several remarks concerning some aspects
of the proofs and possible generalizations of the affine function $g_2$.

\begin{remark}\label{rem:proof_mechanism}
Assume that we are given two univariate functions $g_1$ and $g_2$, and let
$\mathcal S$ denote the class of functions generated from $g_1$ and
$g_2$ by repeated addition and composition.

The proofs of
Theorems~\ref{thm:two_function_univariate}
and~\ref{thm:two_function_multivariate}
are based, in particular, on the following four properties:
\begin{enumerate}
\item [(1)] there exists a dense set $D\subset\mathbb R$ such that every
constant function $t\mapsto \lambda$, $\lambda\in D$, belongs to
$\mathcal S$;

\item [(2)] every affine map $t\mapsto at+b$, $a,b\in D$, belongs to
$\mathcal S$;

\item [(3)] for every $f\in\mathcal S$ and every $\lambda\in D$, the function
$\lambda f$ belongs to $\mathcal S$;

\item [(4)] additive inverses are available in $\mathcal S$.
\end{enumerate}

For the specific choice
\[
g_2(t)=1-\frac t2,
\]
these properties are obtained as follows. First, the functions
\[
t\mapsto 1,\qquad t\mapsto t,\qquad t\mapsto -t,\qquad
t\mapsto \frac t2
\]
belong to $\mathcal S$.

The constant function $1$, together with $-t$ and repeated composition
with $t/2$, allows one to generate all dyadic rational constants.
Thus property~(1) holds with
\[
D=
\left\{
\frac{m}{2^r}: m\in\mathbb Z,\ r\in\mathbb N\cup\{0\}
\right\}.
\]

Repeated composition with $t/2$ gives functions of the form
$t\mapsto t/{2^r}$. Since addition and additive inverses are available,
every function $t\mapsto {mt}/{2^r}$ belongs to $\mathcal S$.
Combining these functions with dyadic rational constants gives all
affine maps $t\mapsto at+b$, $a,b\in D$. Thus property~(2) holds.

Compositions of the functions $t\mapsto {mt}/{2^r}$ with functions
$f\in\mathcal S$ also give scalar multiplication by elements of $D$:
if $f\in\mathcal S$ and $\lambda\in D$, then $\lambda f\in\mathcal S$.
Thus property~(3) holds.

Finally, the function $-t$ gives additive inverses, so property~(4)
holds. This property is used, for instance, in the construction of
finite differences and in the identity
\[
(u+v)^2-u^2-v^2=2uv.
\]

Thus the present method depends not only on the choice of a non-affine
function $g_1$, but also on the fact that the affine function $g_2$
satisfies the four properties above. Any attempt to replace $g_2$ by a
more general affine function
\[
g_2(t)=at+b
\]
and prove the approximation results by the same method must take these
properties into account.
\end{remark}

\begin{remark}
The previous remark indicates that the proofs depend on the
satisfaction of the four properties listed there and for the specific choice
\[
g_2(t)=1-\frac t2,
\]
these properties are obtained through dyadic rational coefficients.

More generally, one may replace $g_2$ by
\[
g_2(t)=1-\frac tq,
\]
where $q\ge2$ is a fixed integer. In this case the same construction
produces coefficients from the set
\[
\mathbb D_q
=
\left\{
\frac{m}{q^r}
:
m\in\mathbb Z,\ r\in\mathbb N\cup\{0\}
\right\},
\]
which is dense in $\mathbb R$. Consequently, affine maps with
$q$-adic rational coefficients can be generated.

The proofs then proceed in the same way, except that the multiplication
step requires a minor modification. Indeed, when $q\ne2$, the function
$t\mapsto t/2$ need not belong to the generated class. However,
the multiplication step can still be recovered by approximation.
Since $\mathbb D_q$ is dense in $\mathbb R$, there exists a sequence
$\lambda_n\in\mathbb D_q$ such that $\lambda_n\to 1/2$.
Hence
\[
\lambda_n\bigl((u+v)^2-u^2-v^2\bigr)
\longrightarrow
uv
\]
uniformly on compact sets. Therefore the same approximation results
remain valid for every integer $q\ge2$.

One may also replace $g_2$ by
\[
g_2(t)=b-\frac tq,
\]
where $q\ge2$ is a fixed integer and $b\ne0$. Indeed,
\[
g_2(g_2(t))
=
b-\frac1q\left(b-\frac tq\right)
=
b\left(1-\frac1q\right)+\frac{t}{q^2}.
\]
Hence
\[
g_2(t)+q\,g_2(g_2(t))=qb.
\]
Thus the constant function $qb$ belongs to the generated class. Applying
$g_2$ to this constant function gives $g_2(qb)=0$,
so the zero function belongs to the generated class. Therefore $g_2(0)=b$
also belongs to the generated class.

Moreover, since the constant function $2qb$ can be obtained by repeated
addition, we have
\[
g_2(2qb)=b-\frac{2qb}{q}=-b.
\]
Thus both $b$ and $-b$ belong to the generated class.

Now
\[
g_2(t)+(-b)=-\frac tq,
\]
so $-t/q$ belongs to the generated class. Adding this function to itself
$q$ times gives $-t$. Hence sign changes are available.

Finally,
\[
g_2(-t)=b+\frac tq,
\]
and adding the constant function $-b$ gives $t/q$. Therefore $t/q$
belongs to the generated class, and adding $t/q$ to itself $q$ times
gives $t$.

Thus the functions
\[
b,\qquad -b,\qquad t,\qquad -t,\qquad \frac tq
\]
are generated using only addition and composition. Consequently,
affine maps
\[
t\mapsto at+c,
\qquad
a\in\mathbb D_q,
\quad
c\in b\mathbb D_q,
\]
can be generated, where
\[
b\mathbb D_q
=
\left\{
b\frac{m}{q^r}
:
m\in\mathbb Z,\ r\in\mathbb N\cup\{0\}
\right\}.
\]
Since both $\mathbb D_q$ and $b\mathbb D_q$ are dense in $\mathbb R$,
the approximation argument remains unchanged.
\end{remark}

\begin{remark}
The condition that $g_1$ be non-affine is best possible. Indeed, if
$g_1$ is affine, then every function obtained from $g_1$ and $g_2$ by
repeated addition and composition is again affine. Hence one cannot
approximate arbitrary continuous functions.

It is natural to ask whether the affine function $g_2$ can be replaced
by a more general affine map
\[
g_2(t)=at+b.
\]
Some restrictions are clearly necessary. If $a=0$, then $g_2$ is
constant and cannot produce dependence on the variable. If $a=\pm1$,
then repeated compositions generate only affine maps of the form
\[
t\mapsto \pm t+c,
\]
so the affine part of the construction becomes too restricted for the
above argument.

The proofs of
Theorems~\ref{thm:two_function_univariate}
and~\ref{thm:two_function_multivariate}
depend essentially on the ability to generate sufficiently many
constants, affine maps, scalar multiplications, and additive inverses
using only addition and composition.

In the special case
\[
g_2(t)=1-\frac t2,
\]
these properties can be verified explicitly (see Remark~\ref{rem:proof_mechanism}).

For a general affine map
\[
g_2(t)=at+b,
\qquad a\neq0,\pm1,
\]
it is not clear whether these algebraic properties remain available.
A complete characterization of affine maps for which the conclusions of
Theorems~\ref{thm:two_function_univariate}
and~\ref{thm:two_function_multivariate}
remain valid is beyond the scope of the methods discussed herein.
\end{remark}

\section{Is one function enough?}\label{sec:single_function}

Theorems~\ref{thm:two_function_univariate}
and~\ref{thm:two_function_multivariate}
show that dense classes can be generated from any continuous non-affine
function together with a suitable affine function. Thus, these results
apply to a large family of generating pairs.

A natural question is whether the number of generators can be reduced to
one. In this section we show that the answer to this question is affirmative.
We construct a specific continuous non-affine function
$g:\mathbb R\to\mathbb R$ that serves as a single generator in both the
univariate and multivariate settings.

For a function $f:\mathbb R\to\mathbb R$ and a positive integer $k$, we
write
\[
f^k=\underbrace{f\circ\cdots\circ f}_{k\ \text{times}}
\]
for the $k$-fold iterate of $f$.

The following theorem is the key ingredient.

\begin{theorem}\label{thm:single_function}
One can construct a continuous non-affine function
$g:\mathbb R\to\mathbb R$
such that
\[
g^{3}(t)=1-\frac{t}{2},
\qquad t\in\mathbb R.
\]
\end{theorem}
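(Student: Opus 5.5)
We construct a continuous cube root of $\varphi(t)=1-t/2$. First we conjugate $\varphi$ to a linear map. The unique fixed point of $\varphi$ is $t_0=2/3$, and $\varphi(t)-t_0=-\tfrac12(t-t_0)$. With $T(t)=t-t_0$ we get $\varphi=T^{-1}\circ L\circ T$, where $L(s)=-s/2$. If $h$ is continuous and non-affine with $h^3=L$, then $g=T^{-1}\circ h\circ T$ is continuous, non-affine, and satisfies $g^3=\varphi$. So it suffices to find a continuous non-affine $h$ with $h\circ h\circ h=L$.

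The obvious choice $h(s)=-2^{-1/3}s$ is affine and therefore useless. Instead we build $h$ on fundamental domains. The map $L$ swaps the half-lines $(0,\infty)$ and $(-\infty,0)$ and contracts by $1/2$. Its square $L^2(s)=s/4$ preserves each half-line. Set $I=[1,4)$; it is a fundamental domain for $s\mapsto s/4$ on $(0,\infty)$. Thus $(0,\infty)=\bigsqcup_{k\in\mathbb Z}4^{-k}I$, and $L$ maps $4^{-k}I$ onto $-\tfrac12 4^{-k}I$ on the negative side. We will define $h$ to cycle through three "slots" before reproducing $L$.

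We use an orbit-splitting construction. Pick the six disjoint intervals
\[
A_0=[1,2),\quad A_1=[2,4)\subset(0,\infty),\qquad B_0=-[1/2,1),\quad B_1=-[1,2)\subset(-\infty,0).
\]
Then $L(A_0)=-[1/2,1)=B_0$, $L(A_1)=B_1$, $L(B_0)=[1/4,1/2)=A_0/4$ and $L(B_1)=A_1/4$. Both $A_0\cup A_1$ and $B_0\cup B_1$ are fundamental domains for $s\mapsto s/4$. The chain $A_0\to A_1\to B_0$ has three steps and should realize $L|_{A_0}$ as a triple composition, so the intended cycle is $A_0\xrightarrow{h}A_1\xrightarrow{h}B_0\xrightarrow{h}\cdots$.

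The concrete plan is as follows:
\begin{enumerate}
\item Choose an arbitrary continuous, strictly increasing, non-affine homeomorphism $\alpha:[1,2]\to[2,4]$ with $\alpha(1)=2$ and $\alpha(2)=4$. Set $h=\alpha$ on $A_0$.
\item Set $h=\beta$ on $A_1$, where $\beta:[2,4]\to-[1/2,1]$ is determined by $\beta\circ\alpha=\gamma$. Here $\gamma$ is a chosen homeomorphism $[1,2]\to-[1/2,1]$.
\item On $B_0$ the requirement $h^3=L$ on $A_0$ forces $h|_{B_0}=L\circ\alpha^{-1}\circ\beta^{-1}$ in the appropriate sense.
\item Extend to all of $\mathbb R\setminus\{0\}$ by the equivariance $h(s/4)=h(s)/4$, which follows from $h$ commuting with $L$. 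Set $h(0)=0$.
\end{enumerate}
A cleaner way to organize this is to define $h$ on the fundamental domain $[1,4)$ of the positive half-line as a piecewise map. One then checks by direct computation that $h^3=L$ on $[1,4)$, extends by $h\circ L=L\circ h$, and verifies consistency.

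The main obstacle is continuity, both at the endpoints where adjacent fundamental domains meet and at $0$. Continuity at $0$ is automatic: the equivariance $h(4^{-k}s)=4^{-k}h(s)$ and boundedness on the fundamental domain give $|h(s)|\le C|s|$. Continuity at the seams $s=1,2,4$ and their images requires the endpoint values of $\alpha,\beta,\gamma$ to match under $L$ and $s\mapsto s/4$. If this bookkeeping becomes messy, I would fall back on a piecewise-linear $h$ with finitely many breakpoints in $[1,4)$, using slopes whose product over a cycle equals the slope of $L$. Such an $h$ is continuous by construction, and it is non-affine on $(0,\infty)$ because it sends positive numbers to both signs.
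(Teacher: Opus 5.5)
\textbf{There is a genuine gap: the construction of $h$ cannot work as designed.} Your reduction to $h^3=L$ with $L(s)=-s/2$ is correct and matches the paper's shift $y=t-\tfrac23$.

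Any continuous $h$ with $h^3=L$ is injective and surjective, because $L$ is. Hence $h$ is a strictly monotone homeomorphism of $\mathbb R$. It must be decreasing, since an increasing $h$ would make $h^3$ increasing. Moreover $h\circ L=h^4=L\circ h$, so $h(0)$ is a fixed point of $L$, which forces $h(0)=0$. Consequently $h$ maps $(0,\infty)$ onto $(-\infty,0)$ and vice versa, so the sign flips at every application.

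Your step 1 violates this: it takes $h=\alpha$ strictly increasing from $A_0=[1,2)$ into $A_1=[2,4)\subset(0,\infty)$. The inconsistency is also visible inside your own plan:
\begin{itemize}
\item $h^3=L$ on $A_0$ forces $h(B_0)=h(h^2(A_0))=L(A_0)=B_0$. This is exactly what your step-3 formula $L\circ\alpha^{-1}\circ\beta^{-1}$ does.
\item The commutation you invoke in step 4 forces $h(B_0)=h(L(A_0))=L(h(A_0))=L(A_1)=B_1$.
\item $B_0=(-1,-\tfrac12]$ and $B_1=(-2,-1]$ are disjoint, so no well-defined $h$ results.
\end{itemize}

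The piecewise-linear fallback has the same defect. Its non-affinity argument (``sends positive numbers to both signs'') describes something impossible. A continuous injective $h$ with $h(0)=0$ that took both signs on $(0,\infty)$ would vanish at some positive point, by the intermediate value theorem.

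\textbf{The missing idea} is the one the paper singles out as crucial: the factor $-\operatorname{sgn}$, which makes $g$ exchange the two sides of the fixed point. Write $h(s)=-\operatorname{sgn}(s)\,k(|s|)$ with $k$ an increasing homeomorphism of $(0,\infty)$. Then $h^3(s)=-\operatorname{sgn}(s)\,k^3(|s|)$, so it suffices to find a non-linear $k$ with $k^3(s)=s/2$.

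Your fundamental-domain idea works cleanly for this problem:
\begin{itemize}
\item Pick points $1=x_0>x_1>x_2>x_3=\tfrac12$.
\item Choose increasing homeomorphisms $k_1:[x_1,x_0]\to[x_2,x_1]$, taken non-linear, and $k_2:[x_2,x_1]\to[x_3,x_2]$.
\item Set $k=\tfrac12(k_2\circ k_1)^{-1}$ on $[x_3,x_2]$.
\item Extend by $k(s/2)=k(s)/2$.
\end{itemize}
The endpoint values match at $x_1$, at $x_2$, and at $\tfrac12$ (both definitions give $x_1/2$ there). A direct check on each of the three pieces gives $k^3(s)=s/2$. Continuity at $0$ follows from the bound $k(s)\le Cs$, as you noted.

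This is the multiplicative counterpart of the paper's proof. The paper instead passes to $u=\ln|y|$ and obtains a non-affine cube root $\phi$ of $u\mapsto u-\ln 2$. It does so by conjugating the translation by $\tfrac13\ln 2$ with $\psi(u)=u+\varepsilon\sin(2\pi u/\ln 2)$, rather than by gluing on fundamental domains.
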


\begin{proof}
The affine map $t\mapsto 1-t/2$
has the unique fixed point $t=2/3$. We will construct a function
$g$ that also fixes this point.
We first construct a non-affine homeomorphism $\phi(u)$ of $\mathbb R$
whose third iterate is the translation $u\mapsto u-\ln 2$. The
function $g(t)$ will then be obtained from $\phi$ using the change of
variables $u=\ln|t-2/3|$.

Let
\[
a:=\ln 2.
\]
Choose a nonzero number $\varepsilon$ satisfying
\[
|\varepsilon|<\frac{a}{2\pi},
\]
and define
\[
\psi(u):=u+\varepsilon\sin\!\left(\frac{2\pi u}{a}\right).
\]
Since
\[
\psi'(u)
=
1+\frac{2\pi\varepsilon}{a}
\cos\!\left(\frac{2\pi u}{a}\right)
>0,
\]
the function $\psi$ is a strictly increasing homeomorphism of
$\mathbb R$.

Because the sine term is $a$-periodic,
\[
\psi(u-a)
=
u-a
+\varepsilon
\sin\!\left(
\frac{2\pi(u-a)}{a}
\right)
=
\psi(u)-a.
\]

Define
\[
\phi(u)
:=
\psi^{-1}\!\left(\psi(u)-\frac a3\right).
\]
Note that $\phi$ is also a strictly increasing homeomorphism of
$\mathbb R$, being the composition of two strictly increasing
homeomorphisms.

Since
\[
\psi(\phi(u))
=
\psi(u)-\frac a3,
\]
applying $\phi$ two more times gives
\[
\psi(\phi^3(u))
=
\psi(u)-a.
\]
Hence
\begin{equation}\label{eq:phi-cube}
\phi^3(u)=u-a.
\end{equation}

Since $\psi(u)-u$ is bounded, so is
$\psi^{-1}(v)-v$. Consequently,
\[
\phi(u)
=
\psi^{-1}\!\left(\psi(u)-\frac a3\right)
=
\psi(u)-\frac a3+O(1)
=
u-\frac a3+O(1).
\]
Hence
\[
\phi(u)\to-\infty
\qquad (u\to-\infty).
\]

Finally, $\phi$ is non-affine. Indeed, if $\phi$ were affine, then
\eqref{eq:phi-cube} would imply
\[
\phi(u)=u-\frac a3.
\]
Applying $\psi$ to the identity
\[
\phi(u)=\psi^{-1}\!\left(\psi(u)-\frac a3\right)
\]
would then yield
\[
\psi\!\left(u-\frac a3\right)=\psi(u)-\frac a3,
\]
which is false by the definition of $\psi$. Therefore $\phi$ is
non-affine.

Next define $g:\mathbb R\to\mathbb R$ by
\[
g(t)
:=
\begin{cases}
\dfrac23,
&
t=\dfrac23,
\\[2mm]
\dfrac23
-
\operatorname{sgn}\!\left(t-\dfrac23\right)
\exp\!\left(
\phi\!\left(
\ln\left|t-\dfrac23\right|
\right)
\right),
&
t\neq\dfrac23.
\end{cases}
\]

Since $\phi$ is continuous, $g$ is continuous on
$\mathbb R\setminus\{\frac23\}$. Moreover, since
\[
\phi(u)\to-\infty
\qquad (u\to-\infty),
\]
we have
\[
g(t)\to\frac23
\qquad
\left(t\to\frac23\right).
\]
Therefore $g$ is continuous on $\mathbb R$.

Let us show that $g$ is non-affine. If $g$ were affine, then, since
$g(2/3)=2/3$ and $g$ maps $(2/3,\infty)$ onto $(-\infty,2/3)$,
it would have the form
\[
g(t)=\frac23-\lambda\left(t-\frac23\right),
\qquad \lambda>0.
\]
For $t>2/3$, putting $u=\ln(t-2/3)$, we obtain
\[
\exp(\phi(u))=\lambda e^u,
\]
and hence
\[
\phi(u)=u+\ln\lambda .
\]
Thus $\phi$ would be affine, a contradiction. 
Therefore $g$ is non-affine.

Note that the factor $-\operatorname{sgn}(t-2/3)$ in the definition of
$g$ plays a crucial role in the above argument. It ensures that
\[
g\bigl((2/3,\infty)\bigr)=(-\infty,2/3)
\]
and
\[
g\bigl((-\infty,2/3)\bigr)=(2/3,\infty).
\]
Thus $g$ exchanges the two sides of the fixed point $2/3$.
Recall that the affine map $t\mapsto 1-t/2$ has the same property.

For $t=2/3$, we have
\[
g^{3}\!\left(\frac23\right)
=
\frac23
=
\left(1-\frac t2\right)\Big|_{t=2/3}.
\]

Now let $t\neq 2/3$, and set
\[
y:=t-\frac23.
\]
Set
\[
F(y)
:=
-\operatorname{sgn}(y)
\exp\!\bigl(\phi(\ln|y|)\bigr),
\qquad y\neq0.
\]
Then
\begin{equation}\label{eq:gF}
g(t)-\frac23
=
F\!\left(t-\frac23\right).
\end{equation}
Iterating \eqref{eq:gF} three times yields
\begin{equation}\label{eq:g3F3}
g^3(t)-\frac23
=
F^3\!\left(t-\frac23\right).
\end{equation}

Since
\[
\operatorname{sgn}(F(y))
=
-\operatorname{sgn}(y)
\]
and
\[
\ln|F(y)|
=
\phi(\ln|y|),
\]
an easy induction gives
\[
F^{k}(y)
=
(-1)^k\operatorname{sgn}(y)
\exp\!\bigl(\phi^{k}(\ln|y|)\bigr),
\qquad k\in\mathbb N.
\]
In particular,
\begin{equation}\label{eq:F3}
F^{3}(y)
=
-\operatorname{sgn}(y)
\exp\!\bigl(\phi^{3}(\ln|y|)\bigr).
\end{equation}
Using \eqref{eq:phi-cube} in \eqref{eq:F3}, we obtain
\[
F^{3}(y)
=
-\operatorname{sgn}(y)
\exp(\ln|y|-a).
\]
Since $a=\ln 2$,
\[
F^{3}(y)
=
-\frac12
\operatorname{sgn}(y)|y|
=
-\frac y2.
\]
Using this identity in \eqref{eq:g3F3}, we obtain
\[
g^{3}(t)-\frac23
=
-\frac12\left(t-\frac23\right).
\]
Hence
\[
g^{3}(t)
=
\frac23-\frac12\left(t-\frac23\right)
=
1-\frac t2.
\]

Thus
\[
g^{3}(t)=1-\frac t2
\]
for every $t\in\mathbb R$.
\end{proof}

Combining Theorem~\ref{thm:single_function} with
Theorems~\ref{thm:two_function_univariate}
and~\ref{thm:two_function_multivariate} yields the following
corollaries.

\begin{corollary}\label{cor:single_function_univariate}
There exists a continuous non-affine function
$g:\mathbb R\to\mathbb R$
such that, for every compact set $K\subset\mathbb R$, the class of
functions obtained from $g$ by repeated addition and composition is
dense in $C(K)$.
\end{corollary}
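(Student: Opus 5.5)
Take $g$ to be the function constructed in Theorem~\ref{thm:single_function}: it is continuous, non-affine, and satisfies $g^{3}(t)=1-t/2$ for all $t\in\mathbb R$. Let $\mathcal T$ be the class of functions obtained from $g$ by repeated addition and composition, and let $\mathcal S$ be the class obtained from the pair $g_1:=g$, $g_2(t):=1-t/2$ by the same operations. I will show $\mathcal S\subseteq\mathcal T$. Theorem~\ref{thm:two_function_univariate} then applies directly, since $g_1=g$ is continuous and non-affine. It gives that $\mathcal S$ is dense in $C(K)$ for every compact $K\subset\mathbb R$, so the larger class $\mathcal T$ is dense as well.

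The inclusion follows from a structural induction on expressions. First, $g_1=g\in\mathcal T$. Second, $g_2=g\circ g\circ g\in\mathcal T$, because $\mathcal T$ contains $g$ and is closed under composition. Now let $E$ be any finite expression built from $g_1,g_2$ using $+$ and $\circ$. Replace every occurrence of $g_2$ by $g\circ g\circ g$. The result is a finite expression built from $g$ alone with the same operations, and it defines the same function. So every element of $\mathcal S$ lies in $\mathcal T$. Formally, this is an induction on the length $L(E)$ as in the subsection on the structure of the generated classes. The base cases $E=g_1$ and $E=g_2$ were handled above. The inductive step uses that $\mathcal T$ is closed under $+$ and $\circ$.

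There is no real obstacle here, because all the work is in Theorems~\ref{thm:single_function} and~\ref{thm:two_function_univariate}. The one point to check is that the hypotheses of Theorem~\ref{thm:two_function_univariate} hold for $g_1=g$, and this requires $g$ to be non-affine. Theorem~\ref{thm:single_function} guarantees exactly this, via the non-affinity of $\phi$.
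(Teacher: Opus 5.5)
Your proposal is correct and follows the paper's argument. The paper obtains this corollary by combining Theorem~\ref{thm:single_function} with Theorem~\ref{thm:two_function_univariate}, using that $g^{3}=g_2$ puts everything generated by $g_1=g$ and $g_2$ inside the class generated by $g$ alone; you simply make explicit this inclusion of generated classes, which the paper leaves implicit.
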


\begin{corollary}\label{cor:single_function_multivariate}
There exists a continuous non-affine function
$g:\mathbb R\to\mathbb R$
such that, for every compact set $K\subset\mathbb R^n$, the class
generated from $g$ and the coordinate functions
$x_1,\dots,x_n$
by repeated addition and composition is dense in $C(K)$.
\end{corollary}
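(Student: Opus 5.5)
The plan is to take $g$ to be the function constructed in Theorem~\ref{thm:single_function} and reduce the corollary to the multivariate two-generator result. That theorem gives a continuous non-affine $g$ with $g^3(t)=1-t/2$. Let $\mathcal T_n$ denote the class generated from $g$ and $x_1,\dots,x_n$ by repeated addition and composition, in the sense of the smallest class satisfying the analogues of properties (i)--(iii) in Theorem~\ref{thm:two_function_multivariate}. The goal is to show $\mathcal T_n\supseteq\mathcal S_n$, where $\mathcal S_n$ is the class of Theorem~\ref{thm:two_function_multivariate} built with $g_1:=g$ and $g_2(t)=1-t/2$.

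The inclusion goes through the minimality of $\mathcal S_n$. We check that $\mathcal T_n$ satisfies (i)--(iii) of Theorem~\ref{thm:two_function_multivariate}:
\begin{enumerate}
\item[(i)] The coordinates lie in $\mathcal T_n$ by definition.
\item[(ii)] Closure under addition holds by definition.
\item[(iii)] If $u\in\mathcal T_n$, then $g\circ u\in\mathcal T_n$ by definition. Applying this three times gives $g\circ g\circ g\circ u=g_2\circ u\in\mathcal T_n$.
\end{enumerate}
Hence $\mathcal T_n$ is a class with properties (i)--(iii). Since $\mathcal S_n$ is the smallest such class, $\mathcal S_n\subseteq\mathcal T_n$.

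Since $g$ is continuous and non-affine, Theorem~\ref{thm:two_function_multivariate} applies with $g_1=g$. It gives that $\mathcal S_n$ is dense in $C(K)$ for every compact $K\subset\mathbb R^n$. Density of the larger class $\mathcal T_n$ follows at once, which proves Corollary~\ref{cor:single_function_multivariate}.

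The main point needing care is that "generated by composition" in the multivariate setting means outer composition with univariate generators, as in (iii). One might worry whether composing with $g$ alone, without $g_2$ as a separate generator, really yields $g_2\circ u$. The identity $g^3=g_2$ settles this, since $g_2\circ u$ is just three successive outer compositions with $g$. No step beyond this bookkeeping seems to require real work, because all analytic content sits in Theorems~\ref{thm:two_function_multivariate} and~\ref{thm:single_function}.
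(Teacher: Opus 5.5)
Your proposal is correct and follows the paper's route: take the $g$ of Theorem~\ref{thm:single_function}, use $g^3=1-t/2$ so that the one-generator class absorbs outer composition with $g_2$, and apply Theorem~\ref{thm:two_function_multivariate} with $g_1=g$. Your minimality argument for $\mathcal S_n\subseteq\mathcal T_n$ just spells out the bookkeeping the paper leaves implicit when it says the corollary follows by combining the two theorems.
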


\medskip

We stress again that Theorems~\ref{thm:two_function_univariate}
and~\ref{thm:two_function_multivariate} are general results: they apply
to every continuous non-affine function when used together with the
affine generator $t\mapsto 1-t/2$. In contrast, Corollaries
\ref{cor:single_function_univariate} and
\ref{cor:single_function_multivariate} are based on a specific
continuous non-affine function constructed in
Theorem~\ref{thm:single_function}. Note that the reduction from two
generators to one is achieved through an explicit construction.

\bibliographystyle{plain}

\end{document}